\title{Stabilization of hyperbolic reaction-diffusion systems on directed networks through the generalized Routh-Hurwitz criterion for complex polynomials}
\author{Riccardo Muolo${}^{1\dagger}$, Anthony Hastir${}^{2,3}$ and Hiroya Nakao${}^{1}$}
\abstract{%
The study of dynamical systems on complex networks is of paramount importance in engineering, given that many natural and artificial systems find a natural embedding on discrete topologies. For instance, power grids, chemical reactors and the brain, to name a few, can be modeled through the network formalism by considering elementary units coupled via the links. In recent years, scholars have developed numerical and theoretical tools to study the stability of those coupled systems when subjected to perturbations. In such framework, it was found that asymmetric couplings enhance the possibilities for such systems to become unstable. Moreover, in this scenario the polynomials whose stability needs to be studied bear complex coefficients, which makes the analysis more difficult. In this work, we put to use a recent extension of the well-known Routh-Hurwitz stability criterion, allowing to treat the complex coefficient case. Then, using the Brusselator model as a case study, we discuss the stability conditions and the regions of parameters when the networked system remains stable.}
\keywords{%
Networked systems, generalized Routh-Hurwitz criterion, complex polynomials, hyperbolic reaction-diffusion systems
}
\begin{document}

\maketitle


\section{Introduction}

Studying stability of a dynamical system is a paramount step before designing appropriate control laws in engineering. For linear differential equations, this task may be carried out by looking at the eigenvalues of the matrix describing the dynamics. When moving to the nonlinear setting, the eigenvalues of the Jacobian matrix of the vector field also give information, but only on the local stability of a particular equilibrium, as shown by the Hartman-Grobman Theorem for the case of fixed point equilibria, or the Andronov-Witt theorem for limit cycle equilibria. Similar results exist when the state-space is infinite-dimensional, typically a Banach or a Hilbert space. This setting is particularly useful to study partial differential equations (PDEs) in an abstract way. In this context, appropriate definitions of linearization have to be given in order to link the stability of a linearized system and its nonlinear version. These questions have been recently analyzed in \cite{Hastir_TAC} and \cite{Hastir_Automatica} in order to study the stability of the equilibria of a nonisothermal axial dispersion tubular reactor based on a new version of the so-called "Fr\'echet differentiability". Therein, it was notably shown that the multiplicity and the stability of the equilibria are "diffusion driven" in the sense that the diffusion coefficient in the process determines whether one or three equilibria are exhibited. Moreover, in the case of only one equilibrium, the latter has been shown locally exponentially stable for the nonlinear system while bistability is depicted in the case of three equilibria, that is, stable -- unstable -- stable.\\

In such context, the goal is to annihilate the formation of instabilities, which disrupt the homogeneous equilibrium of our system. However, when discussing reaction-diffusion system in a stationary state, the objective may as well be antagonistic, as scholars often look for ways to obtain Turing instabilities, which are consequence of a diffusion-driven mechanism, i.e., caused by a difference in the diffusivities of the two interacting species \cite{Turing}, or other kinds of instabilities. Turing theory was conceived in the framework of (parabolic) PDEs, but the setting has then been extended to lattices by Othmer and Scriven \cite{OS1971} and, successively, on complex networks by Nakao and Mikhailov \cite{NM2010}. The formalism developed in the latter work enabled the study of asymmetric couplings, where it was shown that such asymmetry enhanced the diffusion-driven mechanism of instability \cite{Asllani1}. In fact, it was then generally proved that non-reciprocal interactions enhance inhomogeneities \cite{carletti}. 

When dealing with a more realistic setting, an upper bound to the signal propagation speed is considered and one obtains hyperbolic reaction-diffusion system. Now, besides the "classical" diffusion-driven instability, there is a whole new class of patterns caused by the inertia in the signal propagation, as shown by Zemskov and Horsthemke \cite{ZH2016}. The latter study has then been extended to networked systems \cite{jop_carletti}, but with symmetric coupling. In this work, we make a step further by studying the case of hyperbolic reaction-diffusion systems on directed networks. From the above discussion, we can already understand that in such setting it is very difficult to achieve stability: in fact, diffusion, inertia and the asymmetry of the coupling are all factors conspiring against the stability of the homogeneous equilibrium. Moreover, the fact of dealing with an asymmetric coupling has the consequence that the coupling operator has a complex spectrum, making the usual stability analysis techniques, such as the Routh-Hurwitz criterion \cite{Routh1877,Hurwitz1895}, not applicable. Hence, we put to use a recent generalization of such techniques, developed to deal with complex polynomials \cite{HastirMuolo_RH_Gen}. This allows us to visualize the stability regions, which can be fundamental in the design of systems where stability is fundamental, such as chemical reactors arrays. \\

The paper is structured as follows: in the next section we will discuss the linear stability analysis for hyperbolic reaction-diffusion systems on networks; then, in Sec. \ref{sec:3}, we will put to use the extended Routh-Hurwitz criterion for complex polynomials and analyze the stability conditions. Finally, in Sec. \ref{sec:4}, we will aid the visualization of the stability regions through numerical simulations for the case of a chemical model, the Brusselator, and, in Sec. \ref{sec:conclusion}, we conclude by discussing some \textit{caveats} and limitations of the linear stability analysis.

\section{Stability of hyperbolic reaction-diffusion systems on directed networks}\label{sec:2}

Let us consider a network on $n$ nodes, defined by its adjacency matrix $A$, whose elements $A_{ij}$ take value $1$ if there is a link between nodes $i$ and $j$, and $0$ otherwise. A diffusion process on such network is modeled through the discrete diffusion equation. For instance, if we consider the diffusion of a chemical species $u$, we have, for each node $i$ \begin{equation}
    \dot{u}_i=D_u\sum_{i=1}^{n}L_{ij}u_j,
\label{eq:disc_diff}
\end{equation}
where $D_u$ is the diffusion coefficient of species $u$, whose dimensions are $[m^2/s]$ and $L_{ij}=A_{ij}-\delta_{ij}k_i$ is the discrete Laplacian matrix, a negative semi-definite matrix playing the role of the diffusion operator on networks \cite{latora_nicosia_russo_2017}. $k_i$ is the node degree, i.e., to how many other nodes it is connected. The spectrum of the discrete Laplacian is denoted by $\{\Lambda^\alpha \}_{\alpha\in\{1,...,n\}}$, where $\Lambda^1=0$ {is the Laplacian's spectral abscissa, i.e., the maximum real part of the spectrum}. If the network is connected, then we have that $\Lambda^1>\Lambda^\alpha$ $\forall \alpha\in\{2,...,n\}$. Let us remark that, the network being symmetric, the spectrum of the Laplacian is real.\\
The above description works well when the domain where the dynamics takes place (e.g., the network) is small or the diffusivity $D_u$ is large, i.e., one can consider the signal propagation as instantaneous. However, in many practical applications this approximation does not work, hence the diffusion equation needs to be corrected by accounting for an inertia (hence, with dimension $[1/s]$) in the signal propagation, generally indicated with $\tau$, as done by Cattaneo in the 50s \cite{Cattaneo1958}. Eq. \eqref{eq:disc_diff} then becomes
\begin{equation}
    \tau_u\ddot{u}_i+\dot{u}_i=D_u\sum_{i=1}^{n}L_{ij}u_j.
\end{equation}\label{eq:disc_diff_hyp}
The above equation takes several names (Cattaneo equation, telegrapher's equation, relativistic heat equation, to name a few) and it is usually studied in the framework of PDEs, where it is a hyperbolic equation. It has been rigorously derived on networks in \cite{jop_carletti}, where the name \textit{hyperbolic} has been kept despite being an ODE in such framework.\\

Let us now consider a system of two species, $u$ and $v$, which interact by means of two nonlinear functions $f$ and $g$. We consider $n$ identical copies of such system, whose dynamics is described by the following \begin{displaymath}
    \begin{cases}
        \dot{u}_i=f(u_i,v_i), \\ \dot{v}_i=g(u_i,v_i).
    \end{cases}
\end{displaymath} The above system could describe, for instance, the dynamics into a well-stirred environment, such as a chemical reactor. Such dynamics could be, for instance, a stationary state or an oscillating one. Let us observe that, since we are considering a homogeneous domain, there is no propagation, hence the second time derivatives do not appear. Let us know study the case of $n$ copies of the above system and then couple them, by letting the chemical concentrations $u_i$ and $v_i$ diffuse between the reactor in a Fickean (linear) fashion. We are, hence, assuming that the dynamics take place within the nodes, while the links are the means by which the particles can move between adjacent systems. Such setting is schematized in Fig. \ref{fig:fig0}.

\begin{figure}[ht!]
\includegraphics[scale=0.12]{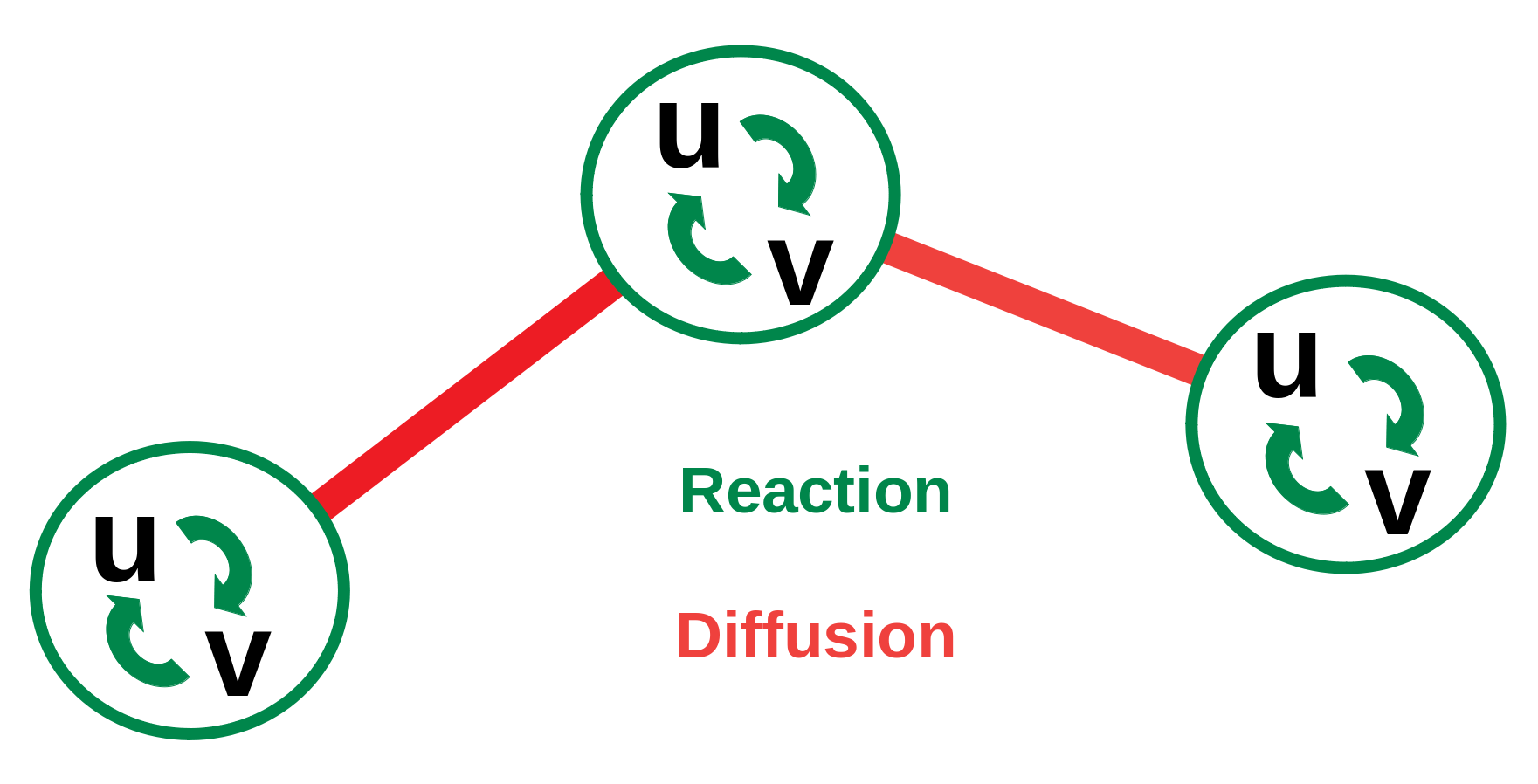}
\caption{The interaction between the two species takes place within the nodes, while the diffusion is through the links. In such setting the reaction-diffusion dynamics is described by a system of ODEs, because the dynamics within the nodes is considered to be a mean-field process, while the space is identified by the network nodes.}\label{fig:fig0}
\end{figure}

If we also account for a finite propagation, we obtain the following hyperbolic reaction-diffusion networked system: 

\begin{equation}
 \begin{cases}  \displaystyle \tau_u\ddot{u}_i+\dot{u}_i=f(u_i,v_i)+D_u\sum_{i=1}^{n}L_{ij}u_j, \\ \\ \displaystyle \tau_v\ddot{v}_i+\dot{v}_i=g(u_i,v_i)+D_v\sum_{i=1}^{n}L_{ij}v_j.
 \end{cases}
\end{equation}\label{eq:hyp_sys3}
As explained in the introduction, let us stress that the described framework is general, and can describe different systems, from chemical systems to ecological neural ones, to name a few. \\

Let us now discuss the stability of system (3) . For sake of simplicity, let us assume that each isolated system lies in a stationary state $(u^*,v^*)$. {Such equilibrium point can be found by solving the system $f=0,~g=0$.} The case of oscillating, or even chaotic, dynamics can be treated with the same techniques, but one cannot obtain analytical results and one must resort to numerical tools, such as the Master Stability Function \cite{pecora1998master}. The first step is to impose the stability of the isolated system, meaning that the Jacobian matrix computed at the stationary state needs to be stable, i.e., given 
\begin{displaymath}
    J_0=\left[\begin{matrix} 
f_u & f_v \\ g_u & g_v
\end{matrix}\right],
\end{displaymath} where $f_u$ is the derivative of the function $f$ with respect to the variable $u$ computed in $(u^*,v^*)$, we need the trace to be negative and the determinant to be positive, i.e, \begin{displaymath}
\begin{cases}
    f_u+g_v<0 ,\\ f_ug_v -f_vg_u>0.
    \end{cases}
\end{displaymath} 
Let us point out that, in general, the inertia in the signal propagation can affect the stability of the homogeneous equilibrium \cite{jop_carletti}. In what follows, however, we will consider settings in which the stability is not affected by the inertia. One can then study whether an inhomogeneous perturbation about such stable homogeneous equilibrium disrupts it, giving rise to an instability, or goes back to the stationary state. The full discussion of the case in which the network is symmetric can be found in \cite{jop_carletti}. In this work, we want to extend such framework by considering asymmetric couplings, which are more common in practical applications. In this case, the equation describing the dynamical process are the same as Eq. \eqref{eq:hyp_sys3}, but with the important difference that now the Laplacian is a directed matrix, defined as $L_{ij}=A_{ij}-\delta_{ij}k_i^{in}$, where $k_i^{in}$ is the in-degree, i.e., how many directed links are incoming to node $i$. Hence, its spectrum will be, in general, complex, meaning that now $\Lambda^\alpha=\Lambda_{Re}^\alpha+i\Lambda_{Im}^\alpha$ $\forall \alpha\in\{2,...,n\}$, {while $\Lambda^1=0$ remains the largest eigenvalue, i.e., $\Lambda_{Re}^\alpha<0=\Lambda^1$ for any $\alpha\in\{2,...,n\}$.}\\

The linear stability analysis breaks down to compute \begin{equation}
\det \left[\begin{matrix} 
J_{11} & J_{12} \\ J_{21} & J_{22}
\end{matrix}\right]=0,
\label{Det_Zero_J}
\end{equation} 
with
\begin{displaymath}
\begin{cases}
J_{11}= \tau_u\lambda^2+\lambda -f_u-D_u \Lambda_{Re}^{\alpha}-i D_u \Lambda_{Im}^{\alpha},\\
J_{12}=-f_v, \\
J_{21}= -g_u, \\
J_{22} = \tau_v\lambda^2+\lambda -g_v-D_v \Lambda_{Re}^{\alpha}-i D_v \Lambda_{Im}^{\alpha},
\end{cases}
\end{displaymath}
{where $\lambda$ is the growth rate of the perturbation in the linear regime, i.e., if it is negative, the system will go back to its stable state, while if it is positive, there will be an exponential instability driving the system away from the equilibrium.}

From the above expressions, \eqref{Det_Zero_J} is equivalent to the following equation  
\begin{equation}
    \lambda^4+\sum_{j=1}^4 (a_j+ib_j)\lambda^{4-j}=0,
    \label{Main_Poly}
\end{equation} 
whose coefficients $a_j, b_j, j=1,2,3,4$, are given by
\begin{displaymath}
\begin{cases}
\displaystyle a_1=\varepsilon(\tau_u+\tau_v),
\\[7pt]
\displaystyle b_1=0,\\[3pt]
\displaystyle a_2=\varepsilon\Big(1-g_v\tau_u-f_u\tau_v-(\tau_u D_v+\tau_v D_u)\Lambda_{Re}^{\alpha}\Big),\\[7pt]
 \displaystyle b_2=\varepsilon\Big(-\Lambda_{Im}^{\alpha}(\tau_u D_v+\tau_v D_u)\Big),\\[7pt]
\displaystyle a_3=\varepsilon\Big(-g_v+f_u+(D_u+D_v)\Lambda_{Re}^{\alpha}\Big),\\[7pt]
\displaystyle b_3=\varepsilon\Big(-\Lambda_{Im}^{\alpha}(D_u+D_v)\Big),\\[7pt]
\displaystyle a_4=\varepsilon\Big[(f_uD_v +g_vD_u)\Lambda_{Re}^{\alpha}+f_u g_v-f_v g_u \\
~~~~~~~~~~~~~~~~~~~~~~~~~~~~~+D_u D_v\Big((\Lambda_{Re}^{\alpha})^2+(\Lambda_{Im}^{\alpha})^2\Big)\Big],\\
\displaystyle b_4=\varepsilon\Big((g_vD_u+f_uD_v)\Lambda_{Im}^{\alpha}+D_uD_v\Lambda_{Re}^{\alpha}\Lambda_{Im}^{\alpha}\Big),
\end{cases}
\end{displaymath}
with $\varepsilon := (\tau_u\tau_v)^{-1}$. The stability of the above polynomial cannot be studied through the Routh-Hurwitz criterion \cite{Routh1877,Hurwitz1895}, which can be applied only to polynomials with real coefficients. However, we can rely on a recent generalization to the complex case, as we discuss in the next section.

\section{The generalized Routh-Hurwitz criterion}\label{sec:3}

This section is devoted to the characterization of stability/instability of the polynomial \eqref{Main_Poly}. To this end, one shall rely on a generalization of the classical Routh-Hurwitz criterion for polynomials with complex coefficients. This method has been initiated in \cite{Frank_1946} and recently presented in an algorithmic way in \cite{HastirMuolo_RH_Gen}. The main advantage of \cite{HastirMuolo_RH_Gen} is the ease of implementation of the method as a pedagogical tool. From an algorithmic point of view, the method presented as in \cite{HastirMuolo_RH_Gen} does not involve more than determinants of $2\times 2$-matrices at each step, no matter the degree of the polynomial.  The idea of the proposed algorithm consists in building a table of coefficients similar to the one introduced by Hurwitz in \cite{Hurwitz1895} and then to exploit its first column to characterize stability/instability. For a 4-th order polynomial of the form \eqref{Main_Poly}, the table constructed via \cite[Algorithm 1]{HastirMuolo_RH_Gen} is given in Tab. \ref{tab:Table_RH_Gen_4}.

\begin{table*}
\begin{center}
\begin{tabular}{ |p{3.5cm}|p{3.5cm}|p{3.5cm}|p{1.8cm}|}
\hline
$a_1^{(1)}=~a_1$  & $b_2^{(1)}=~b_2$ & $a_3^{(1)}=~a_3$ & $b_4^{(1)}=~b_4$ \\
$b_1^{(1)}=a_1b_1-b_2$ & $a_2^{(1)}=a_1a_2-a_3$ & $b_3^{(1)}=a_1b_3-b_4$ & $a_4^{(1)}=a_1a_4$\\
\hline
$a_2^{(2)}=a_1a_2^{(1)}+b_1^{(1)}b_2$ & $b_3^{(2)}=a_1b_3^{(1)}-b_1^{(1)}a_3$ & $a_4^{(2)}=a_1a_4^{(1)}+b_1^{(1)}b_4$ &\\
$b_2^{(2)}=a_2^{(2)}b_2-a_1b_3^{(2)}$ & $a_3^{(2)}=a_2^{(2)}a_3-a_1a_4^{(2)}$ & $b_4^{(2)}=a_2^{(2)}b_4$ &\\
\hline
$a_3^{(3)}=a_2^{(2)}a_3^{(2)}+b_2^{(2)}b_3^{(2)}$ & $b_4^{(3)}=a_2^{(2)}b_4^{(2)}-b_2^{(2)}a_4^{(2)}$ & &\\
$b_3^{(3)}=a_3^{(3)}b_3^{(2)}-a_2^{(2)}b_4^{(3)}$ & $a_4^{(3)}=a_3^{(3)}a_4^{(2)}$ & &\\
\hline
$a_4^{(4)}=a_3^{(3)}a_4^{(3)}+b_3^{(3)}b_4^{(3)}$ & & &\\
\hline
\end{tabular}
\caption{Coefficients of the generalized Routh-Hurwitz criterion}\label{tab:Table_RH_Gen_4}
\end{center}
\end{table*}
As a consequence, the stability/instability of \eqref{Main_Poly} is characterized in the following proposition.

\begin{prop}
The polynomial \eqref{Main_Poly} is stable if and only if the following three conditions are satisfied, namely
\begin{align}
    &\upsilon := \varepsilon\left(\tau_u + \tau_v\right)\left[\tau_u+\tau_v - g_v\tau_u^2 - f_u\tau_v^2 - \tau_u^2 D_v\Lambda_{Re}^\alpha\right.\nonumber\\
    &\left.- \tau_v^2 D_u\Lambda_{Re}^\alpha\right] - \Lambda_{Im}^\alpha\left(D_u+D_v\right)^2 > 0\label{Condi_One}\\
    &\gamma := \alpha(a_3\alpha - a_1^3 a_4 + a_1 b_2 b_4) + \left[\alpha b_2 - a_1\beta\right]\beta > 0\label{Condi_Two}\\
    &\left(\gamma\beta-\alpha\right)\left[\alpha^2 b_4 - (a_1^2 b_4 - b_2 b_4)(\alpha b_2 - a_1\beta)\right]\nonumber\\
    &+ (a_1^2 a_4 - b_2 b_4)\gamma^2 > 0\label{Condi_Three},
\end{align}
where $\alpha := \varepsilon^2\upsilon$ and $\beta := a_1(a_1 b_3 - b_4) + a_3 b_2$.
\end{prop}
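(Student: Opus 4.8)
The plan is to read off the three inequalities as the specialization of the generalized Routh--Hurwitz criterion of \cite{HastirMuolo_RH_Gen} to the monic quartic \eqref{Main_Poly}. That criterion states that \eqref{Main_Poly} is stable if and only if the leading pivots of the successive reductions in Tab.~\ref{tab:Table_RH_Gen_4}, namely $a_1^{(1)},a_2^{(2)},a_3^{(3)},a_4^{(4)}$, are all strictly positive. The whole argument therefore reduces to evaluating these four pivots from the table and then substituting the explicit coefficients $a_j,b_j$ displayed after \eqref{Main_Poly}, while factoring out the positive powers of $\varepsilon=(\tau_u\tau_v)^{-1}$.

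First I would use the structural simplification $b_1=0$, which makes $b_1^{(1)}=a_1b_1-b_2=-b_2$, and propagate it through the table. I would then dispose of the first pivot: since $\varepsilon>0$ and $\tau_u,\tau_v>0$, one has $a_1^{(1)}=a_1=\varepsilon(\tau_u+\tau_v)>0$ unconditionally, so it imposes no constraint and exactly three genuine inequalities remain, matching the statement.

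Next I would compute the remaining pivots in order, using the shorthands $\alpha$ and $\beta$ of the statement as names for intermediate entries. The second pivot is $a_2^{(2)}=a_1a_2^{(1)}+b_1^{(1)}b_2=a_1^2a_2-a_1a_3-b_2^2$; inserting the coefficients and extracting $\varepsilon^2>0$ should collapse it to $\varepsilon^2\upsilon$, whence $a_2^{(2)}>0\iff\upsilon>0$, i.e.\ \eqref{Condi_One}, and I set $\alpha:=a_2^{(2)}$. One checks $\beta=b_3^{(2)}=a_1(a_1b_3-b_4)+a_3b_2$. Substituting $a_3^{(2)}=\alpha a_3-a_1a_4^{(2)}$ with $a_4^{(2)}=a_1^2a_4-b_2b_4$, together with $b_2^{(2)}=\alpha b_2-a_1\beta$, into $a_3^{(3)}=\alpha a_3^{(2)}+b_2^{(2)}\beta$ reproduces exactly the quantity $\gamma$ of \eqref{Condi_Two}, so $a_3^{(3)}>0\iff\gamma>0$. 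Finally, assembling $a_4^{(4)}=a_3^{(3)}a_4^{(3)}+b_3^{(3)}b_4^{(3)}$ out of $a_4^{(3)}=\gamma a_4^{(2)}$, $b_4^{(3)}=\alpha^2b_4-(a_1^2a_4-b_2b_4)(\alpha b_2-a_1\beta)$ and $b_3^{(3)}=\gamma\beta-\alpha b_4^{(3)}$, and factoring as before, should deliver \eqref{Condi_Three}.

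The hard part is expected to be purely computational bookkeeping rather than conceptual. The last pivot $a_4^{(4)}$ is a deeply nested polynomial in the eight coefficients $a_j,b_j$, and the substitution of the physical parameters $\tau_u,\tau_v,D_u,D_v,f_u,f_v,g_u,g_v,\Lambda_{Re}^\alpha,\Lambda_{Im}^\alpha$ must be carried through several layers while the positive powers of $\varepsilon$ are consistently removed. The one genuine subtlety is to rewrite each pivot modulo strictly positive factors, so that the sign of the pivot coincides with the sign of the displayed quantity; this is what legitimizes replacing $a_2^{(2)}>0$, $a_3^{(3)}>0$ and $a_4^{(4)}>0$ by \eqref{Condi_One}, \eqref{Condi_Two} and \eqref{Condi_Three}. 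A symbolic-algebra verification would be the natural way to certify these identities.
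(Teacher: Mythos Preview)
Your proposal is correct and follows essentially the same route as the paper: invoke the generalized Routh--Hurwitz criterion so that stability is equivalent to positivity of the four pivots $a_k^{(k)}$, dismiss $a_1^{(1)}>0$ as automatic since $\tau_u,\tau_v>0$, and then identify $a_2^{(2)}=\varepsilon^2\upsilon$, $a_3^{(3)}=\gamma$, and $a_4^{(4)}$ with the three displayed inequalities via the table recursions (using $b_1=0$, $\alpha=a_2^{(2)}$, $\beta=b_3^{(2)}$). You in fact spell out the $a_4^{(4)}$ step in more detail than the paper, which only indicates it is obtained analogously.
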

\begin{proof}
First, observe that, thanks to \cite[Theorem 2]{HastirMuolo_RH_Gen}, the polynomial \eqref{Main_Poly} is stable if and only if the coefficients $a_k^{(k)}$ in Tab. \ref{tab:Table_RH_Gen_4} are all positive.
Then, note that the inertia constants $\tau_u$ and $\tau_v$ are positive, which makes condition $a_{1}^{(1)}>0$ trivially satisfied. By looking at the constant $a_2^{(2)}$, there holds
\begin{align*}
    &a_2^{(2)} = a_1(a_1 a_2-a_3)-b_2^2\\
    &= \left(\frac{1}{\tau_u}+\frac{1}{\tau_v}\right)^2\varepsilon\left[1-g_v\tau_u-f_u\tau_v\right.\\
    &\left.-(\tau_u D_v+\tau_v D_u)\Lambda_{Re}^\alpha\right] + \left(\frac{1}{\tau_u}+\frac{1}{\tau_v}\right)\varepsilon\left[g_v+f_u\right.\\
    &\left.+ (D_u+D_v)\Lambda_{Re}^\alpha\right] - \varepsilon^2\left[\Lambda_{Im}^\alpha(D_u+D_v)\right]^2\\
    &=\left(\frac{1}{\tau_u}+\frac{1}{\tau_v}\right)\varepsilon\left[\left(\frac{1}{\tau_u}+\frac{1}{\tau_v}\right) - g_v\frac{\tau_u}{\tau_v} - f_u\frac{\tau_v}{\tau_u}\right.\\
    &\left.- \frac{\tau_u}{\tau_v}D_v\Lambda_{Re}^\alpha - \frac{\tau_v}{\tau_u}D_u\Lambda_{Re}^\alpha\right] - \varepsilon^2\Lambda_{Im}^\alpha(D_u+D_v)^2\\
    &=\varepsilon^2\upsilon,
\end{align*}
where $\upsilon$ is defined in \eqref{Condi_One}. Then, $a_2^{(2)}> 0$ if and only if $\upsilon >0$, which shows \eqref{Condi_One}.  Let us now move to the coefficient $a_3^{(3)}$. One has that 
\begin{align*}
    &a_3^{(3)} = \alpha (\alpha a_3 - a_1 a_4^{(2)}) + (\alpha b_2 - a_1 b_3^{(2)})b_3^{(2)}\\
    &= a_3\alpha^2 - a_1\alpha(a_1 a_4^{(1)} - b_2 b_4) + b_2\alpha(a_1 b_3^{(1)}+b_2 a_3)\\
    & - a_1(a_1 b_3^{(1)} + b_2 a_3)^2.
\end{align*}
The condition \eqref{Condi_Two} then follows from the equalities $a_4^{(1)} = a_1a_4$ and $b_3^{(1)} = a_1b_3-b_4$. The condition \eqref{Condi_Three} may be obtained in a similar way, by expressing $a_4^{(4)}$ as a function of $a_3^{(3)} = \gamma$ and $a_2^{(2)} = \varepsilon^2\upsilon$.
\end{proof}

\section{Numerical results}\label{sec:4}

As stated in the introduction, our goal is to annihilate the formation of instabilities, which disrupt the homogeneous equilibrium of our system. In what follows, we will focus our attention to instability arising from the inertia in the signal propagation, or inertia-driven instabilities. In fact, the case of diffusion-driven instabilities has been thoroughly studied in past works, even though not specifically in the context of asymmetrically coupled hyperbolic reaction-diffusion systems, while inertia-driven instabilities constitute a rather unexplored setting; a gap which we aim to contribute in filling with the present work. \\

As we have shown in the previous section, we can find explicit expressions for the stability conditions thanks to the generalized Routh-Hurwitz criterion. Nonetheless, such expressions remain cumbersome and difficult to handle analytically and the stability region is difficult to visualize. Hence, we resort to numerical simulation to validate the developed theory. For this, we will make use of the Brusselator, a model describing the dynamics of an autocatalytic reaction, paradigmatic in the study of chemical instabilities and synchronization in reaction diffusion-systems \cite{PrigogineNicolis1967}. The version of the model that we consider \cite{galla} models the nonlinear interactions between the two chemical species $u$ and $v$ by mean of two positive parameters $b$ and $c$, namely \begin{equation}
\begin{cases}
    f(u,v)=1-(b-1)u+cu^2v, \\
    g(u,v)=bu-cu^2v.
    \end{cases}
\end{equation}\label{eq:bruss}

The above system has a unique fixed point in \begin{displaymath}
(u^*=1,v^*=b/c), \end{displaymath} hence, the Jacobian matrix becomes

\begin{displaymath}
    J_0=\left[\begin{matrix} b-1 & c \\ -b & -c
\end{matrix}\right].
\end{displaymath}

Now, if we consider the above model on an asymmetric networks of $n$ nodes, diffusively coupled, and with a finite signal propagation, we obtain the following hyperbolic reaction-diffusion networked system
\begin{equation}
 \begin{cases}  \displaystyle \tau_u\ddot{u}_i+\dot{u}_i=1-(b-1)u_i+cu_i^2v_i\\
 \displaystyle\hspace{2.5cm}+D_u\sum_{i=1}^{n}L_{ij}u_j, \\ \displaystyle \tau_v\ddot{v}_i+\dot{v}_i=bu_i-cu_i^2v_i+D_v\sum_{i=1}^{n}L_{ij}v_j.
 \end{cases}
 \label{eq:hyp_bruss}
\end{equation}

If we perform a linear stability analysis, as in Sec. \ref{sec:2}, and then apply the generalized Routh-Hurwitz criterion, as in Sec. \ref{sec:4}, we obtain the stability condition for system \eqref{eq:hyp_bruss}. As already mentioned, we want to focus on the instability driven by the inertia in the signal propagation, hence let us fix the diffusion coefficients and the model parameters and focus on the inertia terms $\tau_u$ and $\tau_v$. The other free parameters will be the real and imaginary parts spectrum of the Laplacian, i.e., $\Lambda_{Re}^{\alpha}$ and $\Lambda_{Im}^{\alpha}$, which will be considered as continuous parameters. In fact, in principle, the spectrum of the coupling operator could be anywhere in $\mathbb{C}^-$, but then there will be only $n$ eigenvalues, i.e., the size of the network. Except for $\Lambda^{1}=0$, all the other eigenvalues are potential instability modes and need to be controlled onto the stability region, as can be visualized in Fig. \ref{fig:fig1}.

\begin{figure}[ht!]
\includegraphics[scale=0.55]{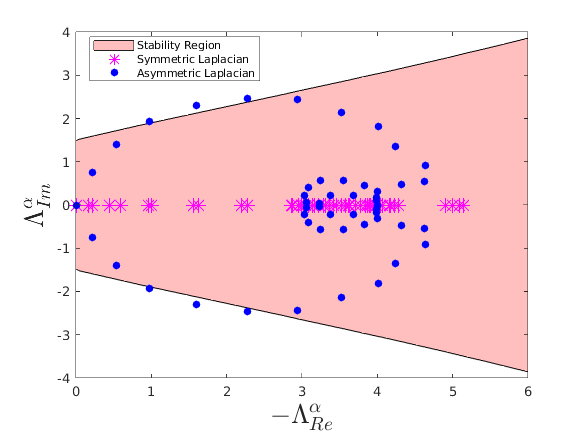}
\caption{Stability regions of the polynomial \eqref{Main_Poly} and spectra of a symmetric and an asymmetric (directed) Laplacians. We see that the spectrum of the symmetric Laplacian (magenta stars) always lie inside the stability region, while the imaginary part of the directed Laplacian spectrum (blue dots) reach the instability region. The directed network has $50$ nodes and it is generated with the Newman-Watts algorithm, as in \cite{Asllani1}, and the undirected one is obtained by symmetrizing the adjacency matrix of the former, i.e., $A_{sym}=(A+A^T)/2$. The parameters are $b = 1.3$, $c = 14$, $D_u = 0.5$, $D_v = 0.5$, $\tau_u=2$ and $\tau_v=1$.}\label{fig:fig1}
\end{figure}

The numerical results can be visualized in Fig. \ref{fig:3D_Surf_Stab_Reg}, where we plot the stability region for system \eqref{eq:hyp_bruss}, i.e., the region where the conditions \eqref{Condi_One}, \eqref{Condi_Two} and \eqref{Condi_Three} are satisfied. Let us observe that, for this choice of parameters in which the two diffusion coefficients are equivalent, i.e., $D_u\equiv D_v$, the only factors causing the instability are the presence of the inertia terms $\tau_u$ and $\tau_v$ and the imaginary part of the spectrum $\Lambda_{Im}^{\alpha}$, induced by the asymmetry of the coupling. In fact, diffusion driven instabilities arise only when $D_u<D_v$ \cite{NM2010}.

\begin{figure*}[ht!]
\includegraphics[scale=1]{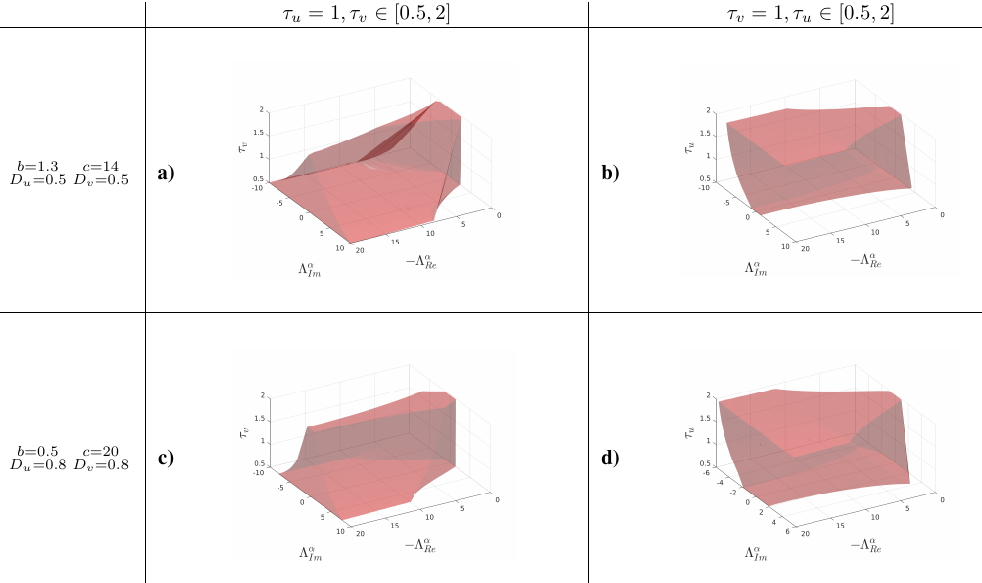}
\caption{Stability regions of the polynomial \eqref{Main_Poly} for different configurations of the parameters $b, c, \tau_u, \tau_v, D_u$ and $D_v$.\label{fig:3D_Surf_Stab_Reg}}
\end{figure*}

In Fig. \ref{fig:3D_Surf_Stab_Reg}a), we can see that, for that choice of parameters, the system becomes unstable also when the coupling is symmetric for a sufficiently large $\tau_v$, while in the other three panels (b-d), only through the imaginary part of the coupling operator instability can be reached. This is very important, for instance, in the design of couplings between reactors, as a certain degree of asymmetry may trigger unwanted instabilities.

\section{Conclusions}\label{sec:conclusion}

In this work we have discussed the stability for hyperbolic reaction-diffusion networked systems with asymmetric coupling, showing that achieving stability is a difficult task due to several factors known to trigger instability. Since an asymmetric coupling yields a complex spectrum, i.e., a complex polynomial whose stability needs to be determined, we have carried out the analysis through a recent generalization of the Routh-Hurwitz criterion for complex polynomials. We have analytically studied the stability conditions and we have numerically visualized the stability region for the Brusselator model, a toy model of an autocatalytic chemical reaction, often adopted in the study of reaction-diffusion systems. Nonetheless, let us remark that our results are general, and the analysis can be carried out for any model of the form of Eq. (3). Moreover, the generalization of the Routh-Hurwitz criterion can be used for any polynomial with complex coefficients. 

Lastly, let us point out that, in many real-world systems, the coupling is not only asymmetric, but also non-normal, meaning that the adjacency matrix $A$ satisfies the condition $AA^*\neq A^*A$ \cite{malbor_teo}. In that case, the prediction given by a linear stability analysis may be valid only when the systems is subjected to infinitesimal perturbations. However, in applications the perturbations are always finite and instabilities can arise even if the Routh-Hurwitz conditions are satisfied \cite{jtb}. In such cases, the stability analysis needs to be complemented with simulations, in order to be sure that the system will remain stable against small perturbations. \\

\textit{The authors are grateful to Timoteo Carletti for interesting discussions. R.M. and H.N. acknowledge JSPS KAKENHI JP22K11919, JP22H00516, and JST CREST JP-MJCR1913 for financial support. A.H. is supported by a FNRS Postdoctoral Fellowship, Grant CR 40010909.}



\begin{thebibliography}{9}
\bibitem{Hastir_TAC}
Hastir, A., Winkin, J. \& Dochain, D. On Exponential Bistability of Equilibrium Profiles of Nonisothermal Axial Dispersion Tubular Reactors. {\em IEEE Transactions On Automatic Control}. \textbf{66}, 3235-3242 (2021)
\bibitem{Hastir_Automatica}
Hastir, A., Winkin, J. \& Dochain, D. Exponential stability of nonlinear infinite-dimensional systems: Application to nonisothermal axial dispersion tubular reactors. {\em Automatica}. \textbf{121} pp. 109201 (2020)
\bibitem{Turing}
Turing, A. The chemical basis of morphogenesis. {\em Phil. Trans. R. Soc. Lond. B}. \textbf{237} pp. 37 (1952)
\bibitem{OS1971}Othmer, H.G. \& Scriven, L.E. Instability and dynamic pattern in cellular networks. {\em J. Theor. Biol.} \textbf{32} pp. 507 (1977)
\bibitem{NM2010}Nakao, H. \& Mikhailov, A. Turing patterns in network-organized activator-inhibitor systems. {\em Nat. Phys.} \textbf{6} pp. 544 (2010)
\bibitem{Asllani1}Asllani, M., Challenger, J., Pavone, F., Sacconi, L. \& Fanelli, D. The theory of pattern formation on directed networks. {\em Nature Communication}. \textbf{5} (2014)
\bibitem{carletti}Carletti, T. \& Muolo, R. Non-reciprocal interactions enhance heterogeneity. {\em Chaos Solit. Fractals}. \textbf{164} pp. 112638 (2022)
\bibitem{ZH2016}Zemskov, E. \& Horsthemke, W. Diffusive instabilities in hyperbolic reaction-diffusion equations. {\em Phys. Rev. E}. \textbf{93} pp. 032211 (2016)
\bibitem{jop_carletti}
Carletti, T. \& Muolo, R. Finite propagation enhances Turing patterns in reaction–diffusion networked systems. {\em J. Phys. Complex.} \textbf{2}, 045004 (2021)
\bibitem{Routh1877}Routh, E. Stability of a given state of motion. (MacMillan,1877)
\bibitem{Hurwitz1895}Hurwitz, A. Über die Bedingungen, unter welchen eine Gleichung nur Wurzeln mit negativen reellen Theilen besitzt. {\em Math. Ann.} \textbf{46}, 273 (1895)
\bibitem{HastirMuolo_RH_Gen}Hastir, A. \& Muolo, R. A generalized Routh-Hurwitz criterion for the stability analysis of polynomials with complex coefficients: application to the PI-control of vibrating structures. {\em IFAC J. Syst. Control}. \textbf{26} pp. 100235 (2023)
\bibitem{latora_nicosia_russo_2017}Latora, V., Nicosia, V. \& Russo, G. Complex Networks: Principles, Methods and Applications. (Cambridge University Press,2017)
\bibitem{Cattaneo1958}
Cattaneo, C. Sur une forme de l'équation de la chaleur éliminant le paradoxe d'une propagation instantanée. {\em C. R. Acad. Sci.}.\textbf{247}, 431 (1958)
\bibitem{pecora1998master}
Pecora, L. \& Carroll, T. Master stability functions for synchronized coupled systems. {\em Phys. Rev. Lett.}. \textbf{80}, 2109 (1998)
\bibitem{Frank_1946}Frank, E. On the zeros of polynomials with complex coefficients. {\em Bulletin Of The American Mathematical Society}. \textbf{52}, 144-157 (1946)
\bibitem{PrigogineNicolis1967}Prigogine, I. \& Nicolis, G. Symmetry breaking instabilities in dissipative systems. {\em J. Chem. Phys.} \textbf{46} pp. 3542 (1967)
\bibitem{galla}Boland, R., Galla, T. \& McKane, A. How limit cycles and quasi-cycles are related in a system with intrinsic noise. {\em J. Stat. Mech.} pp. P09001 (2008)
\bibitem{malbor_teo}Asllani, M., Lambiotte, R. \& Carletti, T. Structure and dynamical behavior of non-normal networks. {\em Sci. Adv.}. \textbf{4}, Eaau9403 (2018)
\bibitem{jtb}Muolo, R., Asllani, M., Fanelli, D., Maini, P. \& Carletti, T. Patterns of non-normality in networked systems. {\em J. Theor. Biol.} \textbf{480} pp. 81 (2019)



\end{thebibliography}
\end{document}